\newtheorem{thm}{Theorem}[section]
\theoremstyle{definition}
\newtheorem{defn}[thm]{Definition}
\newtheorem*{thmA}{Main result}
\numberwithin{equation}{section}
\newcommand{\N}{\mathbb{N}}
\newcommand{\Z}{\mathbb{Z}}
\newcommand{\Q}{\mathbb{Q}}
\newcommand{\R}{\mathbb{R}}
\newcommand{\dcl}{\operatorname{dcl}}
\newcommand{\rtil}{\tilde{\mathbb{R}}}
\newcommand{\Cal}{\mathcal}
\newcommand{\OR}{\overline{\R}}
\def \<{\langle}
\def \>{\rangle}
\def \((  {(\!(}
\def \)) {)\!)}
\begin{document}

\title[Distal and Non-Distal Pairs]{Distal and Non-Distal Pairs}

\author[P. Hieronymi]{Philipp Hieronymi}
\address
{Department of Mathematics\\University of Illinois at Urbana-Champaign\\1409 West Green Street\\Urbana, IL 61801}
\email{phierony@illinois.edu}
\urladdr{http://www.math.uiuc.edu/\textasciitilde phierony}

\author[T. Nell]{Travis Nell}
\address
{Department of Mathematics\\University of Illinois at Urbana-Champaign\\1409 West Green Street\\Urbana, IL 61801}
\email{tnell2@illinois.edu}
\urladdr{http://www.math.uiuc.edu/\textasciitilde tnell2}

\subjclass[2010]{Primary 03C64  Secondary 03C45}
\keywords{Distal, NIP, expansions of o-minimal structures}

\date{\today}

\begin{abstract}
 The aim of this note is to determine whether certain non-o-minimal expansions of o-minimal theories which are known to be NIP, are also distal. We observe that while tame pairs of o-minimal
structures and the real field with a discrete multiplicative subgroup have distal theories, dense pairs of o-minimal structures and related examples do not.
\end{abstract}

\thanks{A version of this paper is to appear in the \emph{Journal of Symbolic Logic}. The first author was partially supported by NSF grant DMS-1300402.}

\maketitle

\section{Introduction}

Over the last two decades, \emph{NIP} (or \emph{dependent}) theories, first introduced by Shelah in
\cite{Shelah}, have attracted substantial interest. Properties of these theories have been studied in detail, and many examples of such theories have been constructed (see \cite{Simon-Book} for a modern overview of the subject). Recently, Simon \cite{Simon-Distal} identified an important subclass of NIP theories called \emph{distal theories}. The motivation behind this new notion is to single out NIP theories that can be considered purely unstable. O-minimal theories, the classical examples of unstable NIP theories, are distal. The aim of this note is to determine whether certain non-o-minimal expansions of o-minimal theories which are known to be NIP, are also distal.\newline

Let $\Cal A=(A,<,\dots)$ be an o-minimal structure expanding an ordered group and let $B\subseteq A$.  We consider theories of structures of the form $(\Cal A,B)$ that satisfy one of the following conditions:
\begin{itemize}
\item[1.] $\Cal A$ is the real field and $B$ is a cyclic multiplicative subgroup of $\R_{>0}$ (\emph{discrete subgroup}),
\item[2.] $\Cal A$ expands a real closed field, $B$ is a proper elementary substructure such that there is a unique way to define a standard part map from $A$ into $B$ (\emph{tame pairs}),
\item[3.] $B$ is a proper elementary substructure of $\Cal A$ dense in $A$ (\emph{dense pairs}),
\item[4.] $\Cal A$ is the real field and $B$ is a dense subgroup of the multiplicative group of $\R_{>0}$ with the Mann property (\emph{dense subgroup}),
\item[5.] $B$ is a dense, definably independent set (\emph{independent set}).
\end{itemize}
Here and throughout this paper, dense means dense in the usual order topology on $A$. All the above examples are NIP. For dense pairs this is due independently to Berenstein, Dolich, Onshuus \cite{BDO}, Boxall \cite{Boxall}, and G\"unayd\i n and Hieronymi \cite{GH-Dependent}; for dense groups this was shown in \cite{Boxall} and \cite{GH-Dependent}; for tame pairs and for the discrete subgroups NIP was first proven in \cite{GH-Dependent}. For a later, but more general result implying NIP for all these theories, see Chernikov and Simon \cite{CS-Ext}. Our main results here are as follows.

\begin{thmA} The theories of structures satisfying 1. or 2. are distal. The theories of structures satisfying 3., 4., or 5. are not distal.
\end{thmA}

\noindent  We observe the following interesting phenomenon: All examples of the above NIP theories that do not define a dense and codense set, are distal. However, all the examples that define a dense and codense set, are not distal. This not true in general. The expansion $(\R,<,\Q)$ of the real line by a predicate for the set of rationals is dp-minimal and hence distal by \cite[Lemma 2.10]{Simon-Distal}.


\subsection*{Definitions and notations} Here are precise definitions of the properties under investigation.
Let $T$ be a complete theory in a language $\Cal L$ and let $\mathbb M$ be a monster model of $T$.  When a sequence
$(a_i)_{i\in I}$ from $\mathbb M^p$ is indiscernible over a parameter set $A$, we say the sequence is $A$-indiscernible. We assume that such a parameter set $A\subseteq \mathbb M$ always has cardinality smaller than the cardinality of saturation of $\mathbb M$. If we say a sequence is indiscernible, we mean the sequence is $\emptyset$-indiscernible.

\begin{defn} We call an $\Cal L$-formula $\varphi(x,y)$ {\it dependent} (in $T$) if for every indiscernible sequence
$(a_i)_{i\in \omega}$ from $\mathbb M^p$ and every $b\in\mathbb M^q$,
there is $i_0\in \omega$ such that either $\mathbb M \models \varphi(a_{i},b)$ for every $i>i_0$ or $\mathbb M \models \neg \varphi(a_i,b)$ for every $i>i_0$. The theory $T$ \emph{is NIP} (or \emph{is dependent}) if every $\Cal L$-formula is dependent in $T$.
\end{defn}

\noindent Here and in what follows, $I,I_1,I_2$ will always be linearly ordered sets. When we write $I_1+I_2$, we mean the concatenation of $I_1$ followed by $I_2$. By $(c)$ we denote the linearly ordered set consisting of a single element $c$.

\begin{defn}\label{defn:distal} We say $T$ is \emph{distal} if whenever $A \subseteq \mathbb M$, and $(a_i)_{i\in I}$ an indiscernible sequence from $\mathbb M^p$ such that
\begin{itemize}
\item[a.] $I = I_1 + (c) + I_2$, and both $I_1$ and $I_2$ are infinite without endpoints,
\item[b.] $(a_i)_{i\in I_1 + I_2}$ is $A$-indiscernible,
\end{itemize}
then $(a_i)_{i\in I}$ is $A$-indiscernible.
\end{defn}
\noindent 
It is an easy exercise to check that every distal theory as defined above is also NIP. When $T$ is NIP, the definition of distality given above is one of several equivalent definitions. Here we will only use this characterization of distality, and we refer the interested reader to \cite{Simon-Distal,Simon-Book} for more information.\newline

\noindent For the purposes of this paper it is convenient to introduce the following notion of distality for a single $\Cal L$-formula.

\begin{defn} Let $\varphi(x_1,\dots,x_n;y)$ be a (partitioned) $\Cal L$-forumula, where $x_i=(x_{i,1},\dots, x_{i,p})$ for each $i=1,\dots,n$. We say $\varphi(x_1,\dots,x_n;y)$ is \emph{distal} (in $T$) if for $b\in \mathbb M^q$ and every indiscernible sequence  $(a_i)_{i\in I}$ from $\mathbb M^p$ that satisfies
\begin{itemize}
\item[a.] $I = I_1 + (c) + I_2$, and both $I_1$ and $I_2$ are infinite without endpoints,
\item[b.] $(a_i)_{i\in I_1 + I_2}$ is $b$-indiscernible,
\end{itemize}
then
\[
\mathbb M \models \varphi(a_{i_1},\dots,a_{i_n};b) \leftrightarrow \varphi(a_{j_1},\dots,a_{j_n};b)
\]
for every $i_1<\dots < i_n$ and $j_1< \dots < j_n$ in $I$.
\end{defn}
\noindent This definition of distality of a single formula depends on the indicated partition of the free variables. It is immediate that $T$ is distal if and only if every $\Cal L$-formula is distal in $T$. Using saturation of $\mathbb M$, one can also see easily that in order to check the distality of a formula, one may assume that $I_1$ and $I_2$ are countable dense linear orders without endpoints.\newline

\noindent We now fix some notation. We will use $m,n$ for natural numbers. For $X\subseteq \mathbb M$, we shall write $\operatorname{dcl}_{\Cal L}(X)$ for the $\Cal L$-definable closure of $X$ in $\mathbb{M}$. When $T$ is an o-minimal theory, the closure operator $\operatorname{dcl}_{\Cal L}$ is a pregeometry. We will use this fact freely throughout this paper. For a tuple $b = (b_1,\ldots,b_n) \in \mathbb{M}^n$ and $X \subseteq \mathbb{M}$, by $Xb$ we mean $X \cup \{b_1,\ldots b_n\}$, and we say that $b$ is $\operatorname{dcl}_{\Cal L}$-independent if the set $\{b_1,\ldots, b_n\}$ is. For a function $f$, $\operatorname{ar}(f)$ will denote the arity of $f$.

\subsection*{Acknowledgements} We thank Pierre Simon and the anonymous referee for very helpful comments. We also thank Danul Gunatilleka, Tim Mercure, Richard Rast, Douglas Ulrich, and in particular Allen Gehret for reading an earlier version of this paper and providing us with excellent feedback.

\section{The discrete case}

In this section we give sufficient conditions for expansions of o-minimal theories by a function symbol to be distal. We prove in sections following this one that both tame pairs and the expansions by discrete groups mentioned above satisfy these conditions. This criterion for distality (and its proof) is closely related to the criterion for NIP given in \cite[Theorem 4.1]{GH-Dependent}. Here we use the same set up. As in \cite{GH-Dependent}, let $T$ be a complete o-minimal theory extending the theory of ordered abelian groups and let $\mathcal{L}$ be its language with distinguished positive element 1. Such a theory has definable Skolem functions. After extending it by constants and by definitions, we can assume the theory $T$ admits quantifier elimination and has a universal axiomatization. In this situation, any substructure of a model of $T$ is an elementary submodel, and therefore $\dcl_{\Cal L}(X) = \langle X \rangle$ for any subset $X$ of any model $\mathcal A$ of $T$; here $\langle X \rangle$ denotes the $\mathcal L$-substructure of $\mathcal A$ generated by $X$. For $\mathcal B \preceq \mathcal A$ we write $\mathcal B \langle X \rangle$ for $\langle \Cal B \cup X\rangle$. Following the notation from \cite{GH-Dependent} we extend $\mathcal{L}$ to $\mathcal{L}(\mathfrak f)$ by adding a new unary function symbol $\mathfrak f$. We let $T(\mathfrak f)$ be a complete $\mathcal{L}(\mathfrak f)$-theory
extending $T$. As usual, we take $\mathbb M$ to be a monster model of $T(\mathfrak f)$.
\begin{thm}\label{thmdiscpair}
Suppose that the following conditions hold:
\begin{itemize}
   \item [(i)] The theory $T(\mathfrak f)$ has quantifier elimination.
   \item [(ii)] For every $(\Cal C,\mathfrak f)\models T(\mathfrak f)$, $\Cal B\preceq
\Cal C$ with $\mathfrak f(\Cal B)\subseteq \Cal B$ and every $c\in \Cal C^k$, there are $l\in \N$ and $d \in  \mathfrak f\big(\Cal B\langle c\rangle\big)^l$ such that
\[
 \mathfrak f\big(\Cal B\langle c\rangle\big) \subseteq  \langle\mathfrak f(\Cal B),d \rangle.
\]
 \item [(iii)] Let $m\geq n$ and let $g,h$ be $\mathcal L$-terms of arities $m+k$ and $n+l$ respectively, $b_1\in \mathbb{M}^{k}, b_2 \in \mathfrak f(\mathbb{M})^{l}$, $(a_i)_{i \in I}$ be an indiscernible sequence from $ f(\mathbb{M})^{n}\times \mathbb{M}^{m-n}$ such that
 \begin{itemize}
 \item[a.] $ I=I_1 + (c) + I_2$, where both $I_1$ and $I_2$ are infinite and without endpoints, and $(a_i)_{i\in I_1 + I_2}$ is $b_1b_2$-indiscernible,
 \item[b.] $a_i=(a_{i,1},\dots,a_{i,m})$ for each $i\in I$, and
 \item[c.] $\mathfrak f(g(a_i,b_1))=h(a_{i,1},\dots,a_{i,n},b_2)$ for every $i \in I_1 + I_2$.
\end{itemize}
 Then $\mathfrak f(g(a_c,b_1))=h(a_{c,1},\dots,a_{c,n},b_2).$
\end{itemize}
Then $T(\mathfrak f)$ is distal.
\end{thm}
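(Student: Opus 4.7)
The plan is to verify distality of $T(\mathfrak f)$ by showing that every $\mathcal L(\mathfrak f)$-formula is distal. By condition (i) we may assume the formula is quantifier-free, and since index-invariance of the truth value under increasing reindexing is preserved by Boolean combinations, it suffices to handle a single atomic formula, i.e.\ an equation $s(x_1,\ldots,x_n;y) = t(x_1,\ldots,x_n;y)$ (or an order inequality) for $\mathcal L(\mathfrak f)$-terms $s,t$. Fix an indiscernible sequence $(a_i)_{i \in I}$ in $\mathbb M^p$ and a parameter tuple $b$ satisfying the hypotheses of Definition~\ref{defn:distal}.

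The key step is to get control over the $\mathfrak f$-values that arise when $s,t$ are evaluated on an increasing tuple $(a_{i_1},\ldots,a_{i_n})$ together with parameters $b$. Starting from an $\mathfrak f$-closed elementary substructure $\mathcal B$ containing $b$ and applying condition (ii) once per adjoined $a_{i_j}$, one obtains finitely many \emph{base} $\mathfrak f$-values $b_2 \in \mathfrak f(\mathcal B)^l$ that depend only on $b$, together with finitely many \emph{new} $\mathfrak f$-values of the form $\mathfrak f(g(a_{i_1},\ldots,a_{i_n},b_1))$ which, via $\mathcal L$-terms alone, generate every $\mathfrak f$-value appearing in $\langle b, a_{i_1}, \ldots, a_{i_n}\rangle$. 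Packaging the increasing $n$-tuple together with these new $\mathfrak f$-values as a single augmented element and performing a Ramsey/compactness extraction inside $\mathbb M$, one obtains an indiscernible sequence $(\tilde a_i)_{i \in I}$ valued in $\mathfrak f(\mathbb M)^{n'} \times \mathbb M^{m'-n'}$, exactly of the shape demanded by (iii), with $(\tilde a_i)_{i \in I_1 + I_2}$ being $bb_2$-indiscernible and the distinguished middle index $c$ preserved.

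In this reduced setting, the original atomic formula at index $i \in I_1 + I_2$ is equivalent to an atomic $\mathcal L$-formula $\psi(\tilde a_i, b, b_2)$, the equivalence witnessed by \emph{definitional} identities of the form $\mathfrak f(g_j(\tilde a_i, b_1)) = h_j(\tilde a_{i,1}, \ldots, \tilde a_{i,n'}, b_2)$. The truth value of $\psi$ along the augmented sequence is already index-invariant on all of $I$, because $T$ is o-minimal and hence distal, and each definitional identity transfers from $I_1 + I_2$ to the middle index $c$ by a direct application of condition (iii). Combining these two facts gives index-invariance of the original atomic formula across all of $I$, which is what we needed.

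I expect the main obstacle to be the enrichment-and-extraction step: one must package an increasing $n$-tuple together with finitely many new $\mathfrak f$-coordinates as a single augmented element (so that condition (iii), phrased for single-index sequences, can actually be applied), then extract an indiscernible sequence which preserves both the position of the distinguished middle index and the $bb_2$-indiscernibility of its initial and final segments, while bookkeeping the parameters $b_1, b_2$ that arise at different $\mathfrak f$-nesting depths in $s, t$. The overall pattern closely follows the NIP argument in \cite[Theorem 4.1]{GH-Dependent}.
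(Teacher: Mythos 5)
Your proposal correctly identifies the ingredients --- reduce to quantifier-free formulas via (i), augment the sequence with $\mathfrak f$-values, appeal to distality of $T$ for the $\mathcal L$-part, and use (iii) to propagate the defining identities to the middle index --- but there is a genuine gap exactly where you flag your own uncertainty, and the paper avoids it by a different reduction than the one you propose. You apply (ii) with $\mathcal B$ an $\mathfrak f$-closed structure containing $b$ and $c$ running over the $n$-tuples $(a_{i_1},\dots,a_{i_n})$, so the ``new'' $\mathfrak f$-values $d$ depend on the whole increasing tuple; to get them into the shape of a sequence you then invoke a ``Ramsey/compactness extraction.'' That step does not work as stated: extraction from an array indexed by increasing $n$-tuples produces a fresh indiscernible sequence and loses the identification of the distinguished element $c$ and the hypothesis that $(a_i)_{i\in I_1+I_2}$ was already $b$-indiscernible, which are precisely the data one must preserve. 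Moreover, reducing an atomic $\mathcal L(\mathfrak f)$-formula to a single $\mathcal L$-formula plus ``definitional identities'' in one pass does not account for nested occurrences of $\mathfrak f$.

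The paper's proof sidesteps both problems. First, it proceeds by induction on the number $e(\psi)$ of occurrences of $\mathfrak f$, replacing one outermost $\mathfrak f$-term per step, which handles nesting cleanly. Second, and more importantly, it reduces at the outset to showing
$\mathbb M\models\psi(a_{u_1},\dots,a_{u_{j-1}},a_c,a_{v_1},\dots,a_{v_{p-j}};b)$ for a fixed position $j$ and fixed indices $u_1<\dots<u_{j-1}\in I_1$, $v_1<\dots<v_{p-j}\in I_2$; only the slot carrying $a_i$ then varies. Third, it applies (ii) in the opposite direction from yours: take $\mathcal A$ to be the $\mathcal L(\mathfrak f)$-substructure generated by the \emph{entire} sequence $(a_i)_{i\in I_1+I_2}$ and take $c=b$, producing a single tuple $d$ with $\mathfrak f(\mathcal A\langle b\rangle)\subseteq\langle\mathfrak f(\mathcal A),d\rangle$. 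Since $d$ lies in the substructure generated by finitely many $a_u,a_v,b$, after possibly enlarging these finite index sets one gets a uniform representation
$\mathfrak f(g(a_{u},a_i,a_{v},b))=h(t_1(a_u,a_i,a_v),\dots,t_n(a_u,a_i,a_v),d)$ for all $i$ strictly between the chosen indices, with $t_j$ of the form $\mathfrak f(s_j)$. The augmented elements $a_i':=(t_1(a_u,a_i,a_v),\dots,t_n(a_u,a_i,a_v),a_i)$ then form an honest indiscernible sequence over the restricted index set $I'=(I_1)_{>u_q}+(c)+(I_2)_{<v_{-r}}$, $bd$-indiscernible away from $c$ by construction --- no extraction is needed, and $c$ keeps its distinguished role. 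Condition (iii) applies directly, and one replaces $\mathfrak f(g(\cdots))$ by $h(\cdots,d)$ in $\psi$ to reduce $e$. Your proposal is pointing at the right hypothesis to invoke at each stage, but without the fix-all-but-one-index reduction and the $\mathcal A$-versus-$\mathcal B$ reversal, the sequence you feed to (iii) does not have the required form.
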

\begin{proof}
By (i), it is enough to show that every (partitioned) quantifier-free $\mathcal{L}(\mathfrak f)$-formula $\psi(x_1,\dots,x_p;y)$ is distal. We will prove this by induction on the number $e(\psi)$ of times $\mathfrak f$
occurs in $\psi$. If $e(\psi)=0$, this follows just from the fact that o-minimal theories are distal. Let $e\in \N_{>0}$ be such that every
quantifier-free $\mathcal{L}(\mathfrak f)$-formula $\psi'$ with $e(\psi')<e$ (with any partition) is distal.  Let $\psi(x_1,\dots,x_p;y)$ be a quantifier-free $\mathcal{L}(\mathfrak f)$-formula with $e(\psi)=e$. We will establish that $\psi$ is distal. Take an indiscernible sequence $(a_i)_{i \in I}$ from $\mathbb M^s$ and $b\in \mathbb M^{k}$ such that $ I=I_1 + (c) + I_2$, where both $I_1$ and $I_2$ are countable dense linear orders without endpoints, and $(a_i)_{i\in I_1 + I_2}$ is $b$-indiscernible. By $b$-indiscernibility we may assume that
\begin{itemize}
\item[(A)]  $\mathbb M\models \psi(a_{i_1},\dots,a_{i_p};b)$ for all $i_1 <\dots < i_p\in I_1+I_2$.
\end{itemize}
 Let $j \in \{1,\dots,p\}$, $u_1<\dots<u_{j-1} \in I_1$ and $v_{1} <\dots<v_{p-j}\in I_2$. It suffices to show that
\begin{equation}\label{eq:disc1}
\mathbb M \models \psi(a_{u_1},\dots,a_{u_{j-1}},a_c,a_{v_1},\dots,a_{v_{p-j}};b).
\end{equation}
Since $e>0$, there is an $\mathcal L$-term $g$ such that the term $\mathfrak f(g(x_1,\dots,x_p,y))$ occurs in
$\psi$.
Now let $\Cal A$ be the $\Cal L(\mathfrak f)$-substructure of $\mathbb M$ generated by $\{a_i:i \in I_1 + I_2\}$. By (ii), there is $d\in \mathfrak f\big(\Cal A\langle b\rangle\big)^{l}$ such that
\[\mathfrak f(\Cal A \langle b \rangle) \subseteq \langle \mathfrak f(\Cal A),d\rangle\]
(use $\mathbb M\restriction \mathcal L$ as $\mathcal C$ and $\mathcal A\restriction \mathcal L$ as $\mathcal B$ in the statement of (ii)).
Take $q,r\in \N$, $u_{j}<\dots <u_q\in I_1$ and $v_{-r}<\dots<v_0$ in $I_2$ such that
\begin{itemize}
\item[(B)] $u_1<\dots < u_q$ and $v_{-r}<\dots<v_{p-j}$,
\item[(C)] $d$ is in the $\Cal L(\mathfrak f)$-substructure generated by $a_u,a_v,b$,
\end{itemize}
where $a_u=(a_{u_1},\dots,a_{u_q})$ and $a_v=(a_{v_{-r}},\dots,a_{v_{p-j}})$. By the definition of $d$, we have for every $i \in I_1+I_2$
\begin{equation*}
\mathfrak f(g(a_{u_1},\dots,a_{u_{j-1}},a_i,a_{v_{1}},\dots,a_{v_{p-j}},b)) \in \langle \mathfrak f(\Cal A),d \rangle,
\end{equation*}
in particular when  $u_{q} < i < v_{-r}$.  Because $(a_i)_{i\in I_1+I_2}$ is $b$-indiscernible, we can (after possibly increasing $q,r$ and extending $a_u$ and $a_v$) find
  an $\mathcal L$-term $h$, $n\in \N$ and $\mathcal L(\mathfrak f)$-terms $t_1,\dots,t_n$ (all of the form $\mathfrak f(s_i)$ for an $\mathcal L$-term $s_i$) such that
\begin{itemize}
\item[(D)] for every $i \in I_1+I_2$ with $u_q < i < v_{-r}$
\[
\mathfrak f(g(a_{u_1},\dots,a_{u_{j-1}},a_i,a_{v_{1}},\dots,a_{v_{p-j}},b)) =h(t_1(a_{u},a_i,a_{v}),\dots,t_n(a_{u},a_i,a_{v}),d).
\]
\end{itemize}
Let $I'=(I_1)_{>u_q} + (c) + (I_2)_{<v_{-r}}$. For each $i\in I'$ set
\[
a_i' := (t_1(a_{u},a_i,a_{v}),\dots,t_n(a_{u},a_i,a_{v}),a_i).
\]
Since $(a_i)_{i\in I}$ is indiscernible, so is $(a_i')_{i \in I'}$. By (C) and the $b$-indiscernibility of $(a_i)_{i\in I_1+I_2}$, we get that $(a_i')_{i\in I'}$ is $bd$-indiscernible. Applying (iii) now using the indiscernible sequence $(a_i')_{i \in I'}$ and (D), we deduce that
\[
\mathfrak f(g(a_{u_1},\dots,a_{u_{j-1}},a_c,a_{v_{1}},\dots,a_{v_{p-j}},b)) =h(t_1(a_{u},a_c,a_{v}),\dots,t_n(a_{u},a_c,a_{v}),d).
\]
From this and (D), we get a quantifier-free $\mathcal{L}(\mathfrak f)$-formula $\psi'$ with $e(\psi')<e$ such that for all $i \in I'$
\begin{align*}
\mathbb{M} \models \psi(a_{u_1},&\dots,a_{u_{j-1}},a_i,a_{v_1},\dots,a_{v_{p-j}};b)\\
&\leftrightarrow\psi'\big(t_1(a_u,a_i,a_v),\dots,t_n(a_{u},a_i,a_{v}),a_i;b,d\big).
\end{align*}
By (A) and the induction hypothesis, \eqref{eq:disc1} follows.
\end{proof}

\section{Discrete groups}\label{discretegroups}

Let $\rtil$ be an o-minimal expansion of
$(\mathbb{R},<,+,\cdot,0,1)$ which is polynomially-bounded with
field of exponents $\mathbb{Q}$. We will establish distality for the theory of the expansion of $\rtil$ by a predicate for the cyclic multiplicative subgroup $2^{\mathbb{Z}}$ of $\R_{>0}$. Towards this goal, let $T$ be the theory of $\rtil$ and $\mathcal{L}$ be its language. Let $\lambda: \mathbb{R} \to
\R$ be the function that maps $x$ to $\max (-\infty, x] \cap 2^{\Z}$ when $x >0$, and to $0$ otherwise. It is immediate that the structures $(\rtil,2^{\mathbb{Z}})$ and $(\rtil,\lambda)$ define the same sets. Van den Dries \cite{vdd-Powers2} showed quantifier elimination for the latter structure when $\rtil$ is the real field. This result was generalized by Miller \cite{Miller-tame} to expansions of the real field with field of exponents $\Q$. It is worth pointing out that by \cite[Theorem 1.5]{discrete} the assumption on the field of exponents can not be dropped.\newline

\noindent  Let $T_{\operatorname{disc}}$ be the theory of
$(\rtil,\lambda)$ in the language $\mathcal{L}(\lambda)$, the extension of $\mathcal{L}$ by a unary function symbol for $\lambda$. In order to show distality of $T_{\operatorname{disc}}$, we can assume that $\rtil$ has
quantifier elimination and has a universal axiomatization.

\begin{thm}\label{discdependent} $T_{\operatorname{disc}}$ is distal.
\end{thm}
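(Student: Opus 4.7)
The plan is to apply Theorem~\ref{thmdiscpair} with $\mathfrak f = \lambda$; the conclusion then reduces to verifying the three conditions (i), (ii), (iii). Condition~(i)---quantifier elimination for $T_{\operatorname{disc}}$ in $\mathcal L(\lambda)$---is exactly the theorem of Miller (generalising van den Dries) recalled above.

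For~(ii), let $(\mathcal C,\lambda) \models T_{\operatorname{disc}}$, $\mathcal B \preceq \mathcal C$ with $\lambda(\mathcal B) \subseteq \mathcal B$, and $c \in \mathcal C^k$. An arbitrary element of $\mathcal B\langle c\rangle$ has the form $\sigma(b,c)$ for an $\mathcal L$-term $\sigma$ and $b \in \mathcal B^p$. Using polynomial boundedness of $\rtil$ with field of exponents $\mathbb Q$ together with $\mathcal L$-cell decomposition, one writes $\sigma(\cdot,c)$ cell-wise as a rational-power product, so that $\lambda(\sigma(b,c))$ lies in $\langle \lambda(\mathcal B), d\rangle$ for a fixed finite tuple $d$ of correction $\lambda$-values depending only on $c$. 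This is a mild adaptation of the cell-decomposition argument used to verify the analogous NIP criterion in \cite[Theorem~4.1]{GH-Dependent}.

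For~(iii)---the crux---we must derive $\lambda(g(a_c,b_1)) = h(a_{c,1},\dots,a_{c,n},b_2)$ from the same equation on $I_1+I_2$. The plan is to split the condition into two parts:
\begin{itemize}
\item[(a)] the $\mathcal L$-bracket $h(a_{i,1},\dots,a_{i,n},b_2) \leq g(a_i,b_1) < 2\,h(a_{i,1},\dots,a_{i,n},b_2)$;
\item[(b)] the membership $h(a_{i,1},\dots,a_{i,n},b_2) \in \lambda(\mathbb M)$.
\end{itemize}
Part~(a) is a pure $\mathcal L$-formula in $a_i$ with parameters $b_1,b_2$, so it transfers to $i=c$ by distality of the o-minimal theory $T$ applied to the $b_1 b_2$-indiscernible sequence $(a_i)_{i\in I_1+I_2}$ and its extension by $a_c$. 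For~(b), the plan is to argue that the $\lambda$-valued coordinates $(a_{i,1},\dots,a_{i,n})_{i\in I}$ of our indiscernible sequence are in fact constant in $i$: since each coordinate lies in the discretely ordered set $\lambda(\mathbb M)$ and the sequence is $\emptyset$-indiscernible in $\mathcal L(\lambda)$, a non-constant strictly monotone coordinate would produce ratios $\rho(i,j) \in \lambda(\mathbb M)_{>1}$ sharing a single $\mathcal L(\lambda)$-type $p$ over $\emptyset$, and the multiplicative relation $\rho(i_1,i_3) = \rho(i_1,i_2)\,\rho(i_2,i_3)$ combined with iterated factorisation into $\lambda$-elements would force closure properties of $p$ that are excluded by a type analysis using the polynomial boundedness with $\mathbb Q$-exponents. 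Once constancy is established, $h(a_{c,1},\dots,a_{c,n},b_2)$ coincides with the common value on $I_1+I_2$, which lies in $\lambda(\mathbb M)$.

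The main obstacle is the constancy claim in (b); in the nonstandard monster, where $\lambda(\mathbb M)$ admits elements whose exponents can be nonstandard, excluding non-constant $\emptyset$-indiscernible sequences in $\lambda(\mathbb M)$ requires a careful $\mathcal L(\lambda)$-type analysis that exploits $\mathbb Q$ being the field of exponents.
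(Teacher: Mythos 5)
Parts (i) and (ii), and the ``bracket'' step (a) of (iii) that transfers the $\Cal L$-formula $h\le g<2h$ by distality of $T$, match the paper. The gap is in your step (b), where you assert that the $\lambda$-valued coordinates $(a_{i,1},\dots,a_{i,n})_{i\in I}$ of an $\emptyset$-indiscernible sequence must be constant. This is false. Writing $G=\lambda(\mathbb M)\setminus\{0\}$, the ordered group $(G,\cdot,<)$ is (after taking the base-$2$ logarithm) a saturated model of $\Th(\Z,+,<)$, and such models contain non-constant (indeed strictly increasing) indiscernible sequences --- for instance any Morley sequence of the generic type at $+\infty$ over $\emptyset$, or a sequence with pairwise infinite ratios. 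Discreteness of $G$ does not rule these out, and the multiplicative-ratio heuristic you sketch does not, either: the relation $\rho(i_1,i_3)=\rho(i_1,i_2)\rho(i_2,i_3)$ is perfectly consistent with the existence of such a sequence, and the $\Q$-exponent hypothesis plays no role in excluding it. So (b) does not go through, and there is no fallback mechanism in your argument for showing $h(a_{c,1},\dots,a_{c,n},b_2)\in G$.

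The paper's proof handles (b) differently and this is the genuinely new idea you are missing. Using \cite[Corollary 6.4]{GH-Dependent}, $h(a_{i,1},\dots,a_{i,n},b_2)=2^t\,a_{i,1}^{q_1}\cdots a_{i,n}^{q_n}\,b_2^{r}$ with $t,q_j\in\Q$, $r\in\Q^l$, and this identity extends to $i=c$ by distality of $T$. Choosing $p$ to clear all denominators, the question becomes whether $2^{pt}a_{c,1}^{pq_1}\cdots a_{c,n}^{pq_n}$ lies in the appropriate coset of $G^{[p]}$ determined by $b_2^{pr}$. The key point is that $G^{[p]}$ has finite index in $G$, with $1,2,\dots,2^{p-1}$ as coset representatives; hence that coset is $2^s G^{[p]}$ for some $s\in\{0,\dots,p-1\}$, and membership in $2^s G^{[p]}$ is an $\Cal L(\lambda)$-$\emptyset$-definable condition on the tuple $a_i$ (the parameter $b_2$ has been completely eliminated, since $2^s$ is $\emptyset$-definable). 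Transfer from $I_1+I_2$ to $i=c$ then follows from the $\emptyset$-indiscernibility of the full sequence $(a_i)_{i\in I}$, with no constancy needed. The reduction to a finite-index subgroup with an $\emptyset$-definable coset representative is the step your proposal lacks.
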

\begin{proof} We need to verify that $T_{\operatorname{disc}}$ satisfies
the assumptions of Theorem \ref{thmdiscpair}. Assumptions (i) and (ii) were already established in \cite[Theorem 6.5]{GH-Dependent}.
It is left to prove (iii).  Let $\mathbb{M}$ be a
monster model of $T_{\operatorname{disc}}$. We denote $\lambda(\mathbb{M})\setminus \{0\}$ by $G$. Note that $G$ is a multiplicative subgroup of $\mathbb{M}_{>0}$. For $p\in \N$, the set of $p$-powers $G^{[p]}:= \{g^p \:\ g \in G\}$ has finitely many cosets in $G$, since $|2^{\mathbb{Z}}: (2^{\mathbb{Z}})^{[p]}|=p$. Indeed, $1,2,\dots,2^{p-1}$ are representatives of the cosets of $G^{[p]}$.

\noindent Take an indiscernible sequence $(a_i)_{i \in I}$ from $\mathbb M^m$, where $I=I_1 + (c) + I_2$ and $I_1$ and $I_2$ are infinite without endpoints, such that
$a_{i,1},\dots,a_{i,n} \in \lambda(\mathbb{M})$ for every $i\in I$ and $a_i=(a_{i,1},\dots,a_{i,m})$. Let $(b_1,b_2)\in \mathbb M^{k} \times \lambda(\mathbb{M})^l$ such that $(a_i)_{i\in I_1 + I_2}$ is $b_1b_2$-indiscernible. Suppose that there are $\mathcal L$-terms $g,h$ such that for $i\in I_1 + I_2$
\[
\lambda(g(a_i,b_1))=h(a_{i,1},\dots,a_{i,n},b_2).
\]
It is left to conclude that $\lambda(g(a_c,b_1))=h(a_{c,1},\dots,a_{c,n},b_2)$. By definition of $\lambda$, we have for every $i \in I_1 + I_2$
\[
\mathbb{M} \models 1 \leq \frac{g(a_{i},b_1)}{h(a_{i,1},\dots,a_{i,n},b_2)}<2.
\]
Since $T$ is distal, the previous statement holds for all $i\in I$. It is left to show that $h(a_{c,1},\dots,a_{c,n},b_2) \in G$.
By \cite[Corollary 6.4]{GH-Dependent} and $b$-indiscernibility of $(a_i)_{i\in I_1 + I_2}$, there are $t,q_1,\dots,q_n\in \mathbb{Q}, r=(r_1,\dots,r_l) \in \mathbb{Q}^l$ such that for every $i \in I_1+I_2$
\[
h(a_{i,1},\dots, a_{i,n},b_2) = 2^t \cdot
a_{i,1}^{q_1}\cdots a_{i,n}^{q_n}\cdot b_2^{r},
\]
where $b_2^{r}$ stands for $b_{2,1}^{r_1}\cdots b_{2,l}^{r_l}$. By distality of $T$, this equation holds for all $i\in I$.
\noindent It is left to show that $ 2^t \cdot a_{c,1}^{q_1}\cdots a_{c,n}^{q_n}\cdot b_2^{r} \in G$. Let $p\in \N$ be such that $p\cdot t,p\cdot q_1,\dots,p \cdot q_n
\in \mathbb{Z}$ and $p \cdot r \in \mathbb{Z}^{l}$. It is enough to prove $ 2^{p\cdot t} \cdot a_{c,1}^{p \cdot q_1}\cdots a_{c,n}^{p \cdot q_n} \in b_2^{p\cdot r} \cdot G^{[p]}$. Let $s \in
\{0,\dots,p-1\}$ be such that $b_2^{p \cdot r}$ is in $2^s \cdot
G^{[p]}$. Then for every $i \in I$,
\begin{equation}\label{thmdiscgroupeq}
2^{p\cdot t} \cdot a_{i,1}^{p \cdot q_1}\cdots a_{i,n}^{p\cdot q_n} \in b_2^{p\cdot r} \cdot G^{[p]} \hbox{ iff } 2^{p\cdot t} \cdot a_{i,1}^{p \cdot q_1}\cdots a_{i,n}^{p \cdot q_n} \in 2^s \cdot G^{[p]}.
\end{equation}
Since the second statement in \eqref{thmdiscgroupeq} holds for $i \in I_1+I_2$ and $(a_i)_{i\in
I}$ is indiscernible, it holds for all $i \in I$ and in particular for $i=c$. \end{proof}

\section{Tame pairs}
For this section, let $T$ be a complete o-minimal theory expanding the theory of real closed
fields in a language $\mathcal{L}$. In \cite{vdDL-T-convexity} van den Dries and Lewenberg introduced the following notion of tame pairs of o-minimal structures.

\begin{defn}
A pair $(\Cal A,\Cal B)$ of models of $T$ is called a {\it tame pair} if $\Cal B\preceq \Cal A$, $\Cal A\neq \Cal B$ and for every $a \in \Cal A$ which is in the convex hull of
$\Cal B$, there is a unique $\operatorname{st}(a) \in \Cal B$ such that $|a-\operatorname{st}(a)|<b$ for
all $b \in \Cal B_{>0}$.
\end{defn}

\noindent The \emph{standard part map} $\operatorname{st}$ can be
extended to all of $\Cal A$ by setting $\operatorname{st}(a)=0$ for all $a$ not in the
convex hull of $\Cal B$. Instead of considering $(\Cal A, \Cal B)$ we will consider $(\Cal A,\operatorname{st})$. It is easy to check that these two structures are interdefinable.
Let $T_t$ be the $\Cal L(\operatorname{st})$-theory of all structures of the form $(\Cal A,\operatorname{st}).$ After extending $T$ by
definitions, we can assume that $T$ has quantifier elimination and is universally axiomatizable. By \cite[Theorem 5.9]{vdDL-T-convexity} and \cite[Corollary 5.10]{vdDL-T-convexity}, $T_{t}$ is complete and has quantifier
elimination.\newline

\noindent We will also need to consider the theory of convex pairs. A $T$-{\it convex} subring of a model $\Cal A$ of $T$ is a convex subring that is closed under all continuous unary $\Cal L$-$\emptyset$-definable functions.
A \emph{convex pair} is a pair $(\Cal A,V)$, where $\Cal A\models T$, $V$ is a $T$-convex subring of $A$, and $V \neq A$. We denote the theory of all such pairs by $T_c$. By  \cite[Corollary 3.14]{vdDL-T-convexity}, this theory is weakly o-minimal. By \cite[Theorem 4.1]{DGL-DP}, every weakly o-minimal theory is dp-minimal and hence distal by \cite[Lemma 2.10]{Simon-Distal}. Therefore $T_c$ is distal.\newline

\noindent For every model $(\Cal A,\operatorname{st})$ of $T_t$, the pair $(\Cal A,V)$ is a model of $T_c$, where $V$ is
the convex closure of $\operatorname{st}(\Cal A)$. It follows immediately that for every $b \in \operatorname{st}(\Cal A)$ and $a
\in \Cal A$
\[
 \operatorname{st}(a)=b \quad \Longleftrightarrow \quad a=b \textrm{ or } ((a-b)^{-1} \notin V) \textrm { or } (b=0 \textrm{ and } a \notin V).
\]
We will not use the explicit description on the right, but we will use the fact that this gives us an $\Cal L(U)$-formula $\psi$ such that for all $a\in \Cal A$ and $b \in \operatorname{st}(\Cal A)$
\begin{equation}\label{convexeq}
(\Cal A,\operatorname{st}) \models  \operatorname{st}(a)=b \hbox{ iff } (\Cal A,V)\models \psi(a,b).
\end{equation}

\begin{thm}\label{tamedistal} $T_t$ is distal.
\end{thm}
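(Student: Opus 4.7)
My plan is to apply Theorem~\ref{thmdiscpair} with $\mathfrak f:=\operatorname{st}$; this requires verifying hypotheses (i)--(iii) for $T_t$. Hypothesis (i), the quantifier elimination of $T_t$, is \cite[Corollary~5.10]{vdDL-T-convexity}, already recalled above. Hypothesis (ii), which controls the standard parts of elements of $\mathcal B\langle c\rangle$ in terms of $\operatorname{st}(\mathcal B)$ together with finitely many new standard parts, was already needed for the verification of NIP for tame pairs in \cite{GH-Dependent} and should be quotable essentially verbatim from there. So the substantive work lies in (iii).

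For (iii), assume we are given $\mathcal L$-terms $g,h$, parameters $b_1\in\mathbb M^k$ and $b_2\in\operatorname{st}(\mathbb M)^l$, and an indiscernible sequence $(a_i)_{i\in I}$ from $\operatorname{st}(\mathbb M)^n\times\mathbb M^{m-n}$ satisfying the hypotheses of (iii), so that in particular $\operatorname{st}(g(a_i,b_1))=h(a_{i,1},\dots,a_{i,n},b_2)$ for all $i\in I_1+I_2$. My first step is to observe that $h(a_{c,1},\dots,a_{c,n},b_2)\in\operatorname{st}(\mathbb M)$: its arguments all lie in $\operatorname{st}(\mathbb M)$, which is the image of an $\mathcal L$-elementary substructure of $\mathbb M\restriction\mathcal L$ and hence is closed under $\mathcal L$-terms. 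So the candidate right-hand side of the desired equation automatically lies in $\operatorname{st}(\mathbb M)$, which is precisely what is needed to apply~\eqref{convexeq} at $i=c$.

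The main step is then to transfer the problem to the convex-pair theory $T_c$, which was observed above to be dp-minimal and hence distal by \cite[Lemma~2.10]{Simon-Distal}. Using the formula $\psi$ from~\eqref{convexeq}, the hypothesis on $I_1+I_2$ translates to
\[ (\mathbb M,V)\models \psi\bigl(g(a_i,b_1),\,h(a_{i,1},\dots,a_{i,n},b_2)\bigr) \quad \text{for all } i\in I_1+I_2. \]
The sequence $(a_i)_{i\in I}$ remains indiscernible, and $b_1 b_2$-indiscernible on $I_1+I_2$, in the reduct $(\mathbb M,V)$ of $(\mathbb M,\operatorname{st})$. Applying distality of $T_c$ to the $\mathcal L(U)$-formula $\psi(g(x,b_1),h(x_1,\dots,x_n,b_2))$ (viewed with $x$ of arity $m$ as the single partitioned variable and $b_1 b_2$ as the parameter) then forces the same $\psi$-relation to hold at $i=c$, and the first step together with~\eqref{convexeq} yields the desired equality $\operatorname{st}(g(a_c,b_1))=h(a_{c,1},\dots,a_{c,n},b_2)$.

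The main point to watch is the asymmetry in~\eqref{convexeq}: the biconditional is only asserted when the second argument is already in $\operatorname{st}(\mathcal A)$, so without the opening step placing $h(a_{c,1},\dots,a_{c,n},b_2)$ in $\operatorname{st}(\mathbb M)$ the reverse direction of~\eqref{convexeq} would fail at $i=c$. Once this is in place, the argument reduces cleanly to the already-known distality of $T_c$, and no further work (beyond the quotes from \cite{vdDL-T-convexity} and \cite{GH-Dependent}) should be required.
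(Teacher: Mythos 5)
Your proposal is correct and follows essentially the same route as the paper's proof: verifying (i) and (ii) by citing \cite{vdDL-T-convexity} and \cite{GH-Dependent}, then reducing (iii) to distality of $T_c$ via the formula $\psi$ from~\eqref{convexeq}, with the key observation that $h(a_{c,1},\dots,a_{c,n},b_2)\in\operatorname{st}(\mathbb M)$ because $\operatorname{st}(\mathbb M)\models T$ is closed under $\mathcal L$-terms. The only difference is that you spell out the asymmetry in~\eqref{convexeq} explicitly, which the paper handles with the same closure observation but leaves tacit.
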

\begin{proof} We will show that $T_t$ satisfies the
assumptions of Theorem \ref{thmdiscpair}. Assumptions (i) and (ii) were already established for \cite[Theorem 5.2]{GH-Dependent}.
We only need to prove (iii). Let $\mathbb{M}$ be a
monster model of $T_t$, and $V$ the convex closure of $\operatorname{st}(\mathbb{M})$. Let $(a_i)_{i \in I}$ be an indiscernible sequence from $\mathbb M^m$, where $I=I_1 + (c) + I_2$ and $I_1$ and $I_2$ are infinite with no endpoints, such that
$a_{i,1},\dots,a_{i,n} \in \operatorname{st}(\mathbb{M})$ for $i\in I$ and $a_i=(a_{i,1},\dots,a_{i,m})$. Let $(b_1,b_2)\in \mathbb M^{k}\times \operatorname{st}(\mathbb{M})^l$ such that $(a_i)_{i\in I_1 + I_2}$ is $b_1b_2$-indiscernible. Suppose that there are $\mathcal L$-terms $g,h$ such that for $i\in I_1 + I_2$
\[
\operatorname{st}(g(a_i,b_1))=h(a_{i,1},\dots,a_{i,n},b_2).
\]
We need to show that $\operatorname{st}(g(a_c,b_1))=h(a_{c,1},\dots,a_{c,n},b_2)$.
Since $\operatorname{st}(\mathbb{M})$ is a model of $T$, we have $h(a_{i,1},\dots ,a_{i,n},b_2) \in \operatorname{st}(\mathbb{M})$ for every $i\in I$.
By \eqref{convexeq}, there is an $\Cal L(U)$-formula $\psi$ such that for $i \in I$
\[
(\mathbb{M},\operatorname{st})\models \operatorname{st}(g(a_i,b_1))=h(a_{i,1},\dots, a_{i,n},b_2)\Longleftrightarrow (\mathbb{M},V)\models \psi(a_i,b).
\]
Since $T_c$ is distal, $\mathbb{M} \models \psi(a_c,b)$.
\end{proof}

\section{Dense Pairs}

In this section we present sufficient conditions for non-distality of expansions of o-minimal theories by a single unary predicate, and give several examples of NIP theories satisfying these conditions. Let $T$ be an o-minimal theory in a language $\mathcal{L}$ expanding that of ordered abelian groups, $U$ a unary relation symbol not appearing in $\mathcal{L}$, and $T_U$ an $\mathcal{L}(U)=\mathcal{L}\cup \{U\}$-theory expanding $T$. Let $\mathbb M$ be a monster model of $T_U$. We denote the interpretation of $U$ in $\mathbb{M}$ by $U(\mathbb{M})$. We say that an $\Cal L(U)$-definable subset $X$ of $\mathbb{M}$ is \emph{small} if there is no $\Cal L$-definable (possibly with parameters) function $f: \mathbb M^m \to \mathbb M$ such that $f(X^m)$ contains an open interval in $\mathbb M$. When we say a set is dense in $\mathbb M$, we mean dense with respect to the usual order topology on $\mathbb M$.

\begin{thm}\label{thm:nondistal}
Suppose the following conditions hold:

\begin{enumerate}

\item $U(\mathbb{M})$ is small and dense in $\mathbb{M}$.

\item For $n \in \N$, $C \subseteq \mathbb{M}$, and $a,b \in \mathbb{M}^n$ both $\dcl_{\Cal L}$-independent over $C \cup U(\mathbb{M})$, $$tp_\Cal{L}(a|C) = tp_\Cal{L}(b|C)\Rightarrow tp_{\Cal L(U)} (a|C)=tp_{\Cal L(U)} (b|C).$$


\end{enumerate}

Then $T_U$ is not distal.

\end{thm}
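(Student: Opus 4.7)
The plan is to exhibit an $\mathcal{L}(U)$-indiscernible sequence $(a_i)_{i\in I}$ and a parameter $b\in\mathbb{M}$ witnessing the failure of Definition~\ref{defn:distal}. First, using smallness of $U(\mathbb{M})$ and saturation, I will construct $(a_i)_{i\in I}$ to be an $\mathcal{L}$-indiscernible, $\dcl_\mathcal{L}$-independent increasing sequence in $\mathbb{M}\setminus\dcl_\mathcal{L}(U(\mathbb{M}))$ that is $\dcl_\mathcal{L}$-independent over $U(\mathbb{M})$ and lies in a bounded region of $\mathbb{M}$, where $I=I_1+(c)+I_2$ with $I_1,I_2$ countable dense linear orders without endpoints. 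Since the sequence avoids $\dcl_\mathcal{L}(U(\mathbb{M}))$ and is $\dcl_\mathcal{L}$-independent over $U(\mathbb{M})$, hypothesis~(2) applied with $C=\emptyset$ upgrades $\mathcal{L}$-indiscernibility to $\mathcal{L}(U)$-indiscernibility.

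Next, since $U(\mathbb{M})$ is dense and nontrivial in $\mathbb{M}$ it is unbounded, so I pick $u\in U(\mathbb{M})$ large enough that $b:=a_c+u$ exceeds every $a_j$ with $j\in I_2$, and hence every term of $(a_i)_{i\in I_1+I_2}$. Because $(a_i)_{i\in I_1+I_2}$ is $\dcl_\mathcal{L}$-independent from $\{a_c,u\}$ over $\emptyset$, the element $b$ lies $\dcl_\mathcal{L}$-generically above the subsequence. The standard o-minimal fact that an $\mathcal{L}$-indiscernible sequence remains $\mathcal{L}$-indiscernible over any parameter that lies $\dcl_\mathcal{L}$-generically at $+\infty$ relative to it—because the $\mathcal{L}$-automorphism that witnesses indiscernibility over $\emptyset$ can be chosen to fix such a parameter, by uniqueness of the generic type at $+\infty$ in the o-minimal reduct—implies $(a_i)_{i\in I_1+I_2}$ is $\mathcal{L}$-indiscernible over $b$. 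Now observe $\dcl_\mathcal{L}(\{b\}\cup U(\mathbb{M}))=\dcl_\mathcal{L}(\{a_c\}\cup U(\mathbb{M}))$, so the subsequence is also $\dcl_\mathcal{L}$-independent over $\{b\}\cup U(\mathbb{M})$. Applying hypothesis~(2) with $C=\{b\}$ then lifts $\mathcal{L}$-indiscernibility to $\mathcal{L}(U)$-indiscernibility of $(a_i)_{i\in I_1+I_2}$ over $b$.

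Finally, I will show $(a_i)_{i\in I}$ fails to be $b$-indiscernible by means of the $\mathcal{L}(U)$-formula $\varphi(x;y):=U(y-x)$. On the one hand, $\varphi(a_c;b)$ holds since $b-a_c=u\in U(\mathbb{M})$. On the other hand, for $i\neq c$ we have $b-a_i=(a_c-a_i)+u$; if this element lay in $U(\mathbb{M})$ then $a_c-a_i\in\dcl_\mathcal{L}(U(\mathbb{M}))$, forcing $a_c\in\dcl_\mathcal{L}(a_i,U(\mathbb{M}))$ and contradicting the $\dcl_\mathcal{L}$-independence of the sequence over $U(\mathbb{M})$. Hence $\varphi$ separates $a_c$ from every other $a_i$, so $(a_i)_{i\in I}$ is not $b$-indiscernible, witnessing non-distality. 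The main obstacle will be the o-minimal indiscernibility argument of the second paragraph: one must verify that inserting the generic upper-bound parameter $b$ preserves $\mathcal{L}$-indiscernibility of the subsequence, which requires carefully choosing the sequence in the first paragraph so that it is bounded, $\mathcal{L}$-indiscernible, and $\dcl_\mathcal{L}$-independent over $U(\mathbb{M})$ simultaneously—a routine but delicate construction by Erd\H{o}s--Rado and saturation.
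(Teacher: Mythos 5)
Your overall strategy is essentially the paper's with the order of construction swapped: both arguments produce an $\Cal L(U)$-indiscernible sequence $(a_i)_{i\in I}$ and a parameter $b$ such that $a_c$ and $b$ differ by an element of $U(\mathbb M)$, both feed $\dcl_{\Cal L}$-independence over $U(\mathbb M)$ (resp.\ over $U(\mathbb M)b$) into hypothesis (2) to lift $\Cal L$-indiscernibility to $\Cal L(U)$-indiscernibility, and both break $b$-indiscernibility via the formula detecting membership of $a_c-b$ in $U$. The paper fixes $b$ (independent over $U(\mathbb M)$) first and constructs the sequence as a Morley sequence in the $+\infty$ type over $\dcl_{\Cal L}(b)$ with $a_c=b+u$ spliced in, so that $b$-indiscernibility of the subsequence is automatic; you build a bounded sequence first and then try to place $b$ above it.

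There is a genuine gap in your second step. You choose $u\in U(\mathbb M)$ so that $b=a_c+u$ merely exceeds every $a_j$, and then assert that $\dcl_{\Cal L}$-independence of the subsequence from $\{a_c,u\}$ forces $b$ to lie ``$\dcl_{\Cal L}$-generically at $+\infty$'' over the subsequence. That inference is unsound. For the indiscernibility claim you invoke, $b$ must lie above \emph{all of} $\dcl_{\Cal L}(\{a_j:j\in I_1+I_2\})$, not merely above the $a_j$ themselves, and being independent of the $a_j$ while sitting above them does not preclude $b$ from landing inside a definable cut of that closure. For instance, with the $a_j$ increasing through a bounded interval, one can have $a_j+a_{j'}<b$ for small $j<j'$ but $a_j+a_{j'}>b$ for large $j<j'$, so the $\Cal L$-formula $x_1+x_2<y$ already witnesses failure of $\Cal L$-indiscernibility of $(a_j)_{j\in I_1+I_2}$ over $b$, and everything downstream collapses. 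The fix is to demand that $a_c+u$ dominate all of $\dcl_{\Cal L}(\{a_i:i\in I\})$: this closure is countable, so by saturation there is an interval of $\mathbb M$ lying entirely above it, and density of $U(\mathbb M)$ then supplies a $u\in U(\mathbb M)$ in that interval. With that stronger choice of $u$ your argument goes through, but as written the crucial $\Cal L$-indiscernibility-over-$b$ step does not follow from what you established.
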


\begin{proof}
Let $b\in \mathbb{M}$ be $\operatorname{dcl}_{\Cal L}$-independent over $U(\mathbb{M})$. The existence of such a $b$ follows immediately from smallness of $U(\mathbb{M})$ and saturation of $\mathbb{M}$. Let $I_1,I_2$ be two countable linear orders without endpoints. Consider a set $\Phi$ containing $\Cal L(U)$-$b$-formulas in the variables $(x_i)_{i\in I_1+(c)+I_2}$ expressing the following statements:

\begin{enumerate}

\item[(i)] $\{x_i:i\in I_1+I_2\}$ is $\operatorname{dcl}_{\Cal L}$-independent over $U(\mathbb{M})b$,

\item[(ii)]  $f(x_{i_1},\ldots,x_{i_n},b)<x_{i_{n+1}}$, for each $i_1<\dots < i_{n+1} \in I_1+(c)+I_2$ and $\Cal L$-$\emptyset$-definable function $f$,


\item[(iii)] there is $u \in U(\mathbb{M})$ such that $x_c = u+b$. 

\end{enumerate}
We will show that $\Phi$ is realized in $\mathbb{M}$. By saturation of $\mathbb{M}$ it is enough to show that every finite subset $\Phi_0$ of $\Phi$ is realized. Let $\mathcal{F}=\{f_1,\ldots,f_m\}$ be the $\Cal L$-definable functions appearing in formulas of the form (ii) in $\Phi_0$. Let $i_1<\dots<i_n \in I_1+(c)+I_2$ be the indices of variables occurring in $\Phi_0$. We may assume $c$ is among these, and by adding dummy variables that each $f_j$ is of the form $f(x_{i_1},\ldots x_{i_k},b)$ for some $k<n$. We now recursively choose  $(a_{i_1},\ldots,a_{i_n})$ realizing the type $\Phi_0$.


\noindent Suppose we have defined $a_{i_1},\ldots, a_{i_{k-1}}$. If $k=1$, we will have defined no previous $a_i$, and the functions below will be of arity $1$ only mentioning $b$. If $i_k=c$, then by denseness of $U(\mathbb{M})$ we may choose $a_c$ in
\[
\big(b+U\big) \cap \Big(\max_{f\in \mathcal{F}, ar(f)=k} f(a_{i_1},\ldots, a_{i_{k-1}},b),\infty\Big).
\]
\noindent If $i_k\neq c$, then by smallness of $U(\mathbb{M})$ we may choose $a_{i_k}$ in
\[
\Big(\max_{f\in \mathcal{F}, ar(f)=k} f(a_{i_1},\ldots, a_{i_{k-1}},b),\infty\Big) \setminus \operatorname{dcl}_{\Cal L}(U(\mathbb{M})ba_{i_1}\cdots a_{i_{k-1}}).
\]
As $(a_{i_1},\ldots, a_{i_n})$ realizes $\Phi_0$, $\Phi$ is finitely satisfiable.




\noindent By saturation, we can pick a realization $(a_i)_{i \in I_1+(c)+I_2}$ of $\Phi$ in $\mathbb{M}$. This sequence can be thought of as a very rapidly growing sequence; each element will realize the type at $+\infty$
over the $\Cal L$-definable closure of everything before it. Therefore the sequence is a Morley sequence for the $\Cal L$-type of $+\infty$ over $\dcl_{\Cal L}(b)$, and hence is $\Cal L$-$b$-indiscernible. As $\dcl_\Cal{L}$ is a pregeometry and $b$ is $\dcl_{\Cal L}$-independent over $U(\mathbb M)$, (i) and (iii) together imply that the full sequence is $\dcl_\Cal{L}$-independent over $U(\mathbb M)$. Thus by (2), the $\Cal L$-indiscernibility of these sequences lifts to $\Cal L(U)$-indiscernibility; that is, $(a_i)_{i \in I_1+I_2}$ is $\Cal{L}(U)$-indiscernible over $b$, and the full sequence is $\Cal{L}(U)$-indiscernible. However, since $a_c = b+u$ for some $u \in U(\mathbb{M})$, the full sequence is not $\Cal{L}(U)$-indiscernible over $b$. Hence $T_U$ is not distal.
\end{proof}

\subsection*{Optimality} Note that the assumption that $T$ expands the theory of ordered abelian groups can not be dropped. As pointed out in the introduction the theory of the structure $(\R,<,\Q)$ is distal. However, it is not hard to check that the theory of $(\R,<,\Q)$ satisfies the other assumptions of Theorem \ref{thm:nondistal}.

\subsection*{Dense pairs} Let $\Cal A,\Cal B$ be two models of an o-minimal theory $T$ expanding the theory of ordered abelian groups such that $\Cal B \preceq \Cal A$, $\Cal B\neq \Cal A$, and $\Cal B$ is dense in $\Cal A$. We call $(\Cal A,\Cal B)$ a \emph{dense pair} of models of $T$. Let $T^d$ be the theory of dense pairs in the language $\Cal L(U)$. By van den Dries \cite{densepairs} $T^d$ is complete. Moreover, for every dense pair $(\Cal A,\Cal B)$, the underlying set of $\Cal B$ is small by \cite[Lemma 4.1]{densepairs}. While not stated explicitly, it follows almost immediately from \cite[Claim on p.67]{densepairs} that $T^d$ also satisfies (2) of Theorem \ref{thm:nondistal} (see \cite[Proposition 2.3]{EGH} for detailed proof). Therefore $T^d$ is not distal.

\subsection*{Dense groups} Let $\OR$ be the real field $(\R,<,+,\cdot,0,1)$. Let $\Gamma$ be a dense subgroup of $\mathbb{R}_{>0}$ that has the \emph{Mann property}, that is for every $a_1,\dots,a_n \in \mathbb{Q}^{\times}$, there are finitely many $(\gamma_1,\dots,\gamma_n) \in \Gamma^n$ such that $a_1\gamma_1+\dots+a_n\gamma_n = 1$ and $\sum_{i\in I} a_i\gamma_i \neq 0$ for every proper nonempty subset $I$ of $\{1,\dots,n\}$. Every multiplicative subgroup of finite rank in $\R_{>0}$ has the Mann property. Let $\Cal L$ be the language of $\OR$ expanded by a constant symbol for each $\gamma \in \Gamma$. Let $T_{\Gamma}$ be the $\Cal L(U)$-theory of $(\OR,(\gamma)_{\gamma \in \Gamma},\Gamma)$ in this language. This structure was studied in detail by van den Dries and G\"unayd\i n \cite{densegroups}. A proof that every model satisfies (1) of Theorem \ref{thm:nondistal} is in \cite[Proposition 3.5]{GH-Elliptic}. Similarly to dense pairs, it is not mentioned in \cite{densegroups} that these theories satisfy condition (2) of Theorem \ref{thm:nondistal}. However, it can easily be deduced from the proof of \cite[Theorem 7.1]{densepairs} (see also  \cite[p. 6]{EGH}).

\noindent The argument can easily be extended to related structures (see \cite{BZ,GH-Elliptic,H-tau}).

\subsection*{Independent sets} We finish with another class of structures that were studied recently by Dolich, Miller and Steinhorn \cite{dms2}. Let $T$ be an o-minimal theory in a language $\mathcal{L}$ expanding that of ordered abelian groups. Let $T^{\operatorname{indep}}$ be an $\Cal L(U)$-theory extending $T$ by axioms stating that $U$ is dense and $\dcl_{\Cal L}$-independent. By \cite{dms2}, $T^{\textrm{indep}}$ is complete. Every model of $T^{\textrm{indep}}$ satisfies (1) of Theorem \ref{thm:nondistal} by \cite[2.1]{dms2}. By \cite[2.12]{dms2}, condition (2) of that theorem also holds for $T^{\operatorname{indep}}$.

  \bibliographystyle{plain}
  \bibliography{hieronymi}

\end{document}